\documentclass[11pt,a4paper]{article}
\usepackage[lmargin=1.0in,rmargin=1.0in,bottom=1.0in,top=1.0in,twoside=False]{geometry}
\usepackage{mathtools}
\usepackage{hchang}

\let\qedhere\relax

\nolinenumbers

\usepackage{amsthm}
\usepackage{graphicx}%,algorithmic,algorithm}%, verbatim}
\usepackage{enumerate}
\usepackage{tikz}
\usepackage{stackengine}
\usepackage{accents}
\usepackage{textpos}
\usetikzlibrary{shapes,calc,fit}

\usepackage[T1]{fontenc}

\usepackage{enumitem}

\usepackage{microtype}
\usepackage{comment}
\usepackage[capitalize,nameinlink]{cleveref}

\crefname{figure}{Figure}{Figures}
\Crefname{figure}{Figure}{Figures}

\usepackage{stmaryrd}

\newtheorem{lemma}{Lemma}
\newtheorem{theorem}{Theorem}

\newtheorem{corollary}{Corollary}

\newtheorem{conjecture}{Conjecture}
\newtheorem{observation}{Observation}

\newcommand{\Oh}{\mathcal{O}}

\newcommand{\real}{\mathbb{R}}
\newcommand{\vis}{\mathcal V}

\newcommand{\hkb}{n_{\mathrm{HKB}}}
\DeclareMathOperator{\ex}{ex}

\author{\'Edouard Bonnet\\
        {CNRS, ENS de Lyon, Universit\'e Claude Bernard Lyon 1, LIP, UMR 5668}\\
        \href{mailto:edouard.bonnet@ens-lyon.fr}{edouard.bonnet@ens-lyon.fr}
}

\title{Large Finite Point Sets Have 4 Collinear Points or a 6-Clique}

\makeatletter
\AtBeginDocument{%
  \patchcmd{\@maketitle}
    {{\LARGE \@title \par}\vskip 1.5em}
    {{\LARGE \@title \par}\vskip 3.5em}
    {}
    {\PackageError{main}{Could not change title--author spacing}{}}
}
\makeatother

\begin{document}

\date{}

\maketitle

\begin{abstract}
  We prove that every finite point set of size at~least~$10^{11055931}$ has four collinear points or six points that pairwise see each other.
  This resolves the first open case of the big-line-big-clique conjecture of Kára, Pór, and Wood. 
\end{abstract}

\section{Introduction}

Given a~point set $P \subseteq \real^2$, two distinct points $p, p' \in P$ \emph{see} each other or are \emph{visible} (implicitly, with respect to~$P$) if the only points of~$P$ on the line segment $\overline{pp'}$ are $p$ and~$p'$.
If instead there is a~point $q \in P \setminus \{p,p'\}$ that lies on $\overline{pp'}$, we say that $q$ \emph{blocks} the pair $p, p'$.
The visibility graph of~$P$, denoted by $\vis(P)$, has vertex set $P$ and an edge between two distinct points of~$P$ whenever they see each other in~$P$.
We denote by $\chi$ and $\omega$ the chromatic number and the clique number, respectively.

Kára, Pór, and Wood conjectured that every sufficiently large finite planar point set has many collinear points or many pairwise visible points~\cite{Kara05}.
This is commonly referred to as the big-line-big-clique conjecture.

\begin{conjecture}[Big-line-big-clique~\cite{Kara05}]\label{conj:blbc}
  For all positive integers $k, \ell$, there is an integer $n := n(k, \ell)$ such that every finite point set $P \subseteq \real^2$ of size at~least~$n$ has $\ell$ collinear points or $\omega(\vis(P)) \geqslant k$.  
\end{conjecture}

The big-line-big-clique conjecture fails for infinite point sets: there is an infinite point set without four collinear points whose visibility graph is triangle-free~\cite{Por10inf}.
A~simple projection argument shows that the conjecture is equivalent if stated in $\mathbb R^d$ for any dimension $d \geqslant 2$~\cite{Kara05}. 

The conjecture is trivial for $\ell=3$ (thus $\ell \leqslant 3$) and every $k$ because if no three points are collinear, then $\vis(P)$ is a~complete graph.
The authors of~\cref{conj:blbc} showed the case $k \leqslant 4$ and every $\ell$~\cite{Kara05}.
The case $k = 5$ and $\ell=4$ was established \cite{Addario07}.
Then \cref{conj:blbc} was proved for $k = 5$ and every $\ell$~\cite{Abel11}, and later the bound~in~$\ell$ was improved to~$\Theta(\ell^2)$~\cite{Barat15}.
Prior to our work, the conjecture was open for all the other pairs $(k, \ell)$.

\medskip
In this note, we show the first open case: $k=6$ and $\ell=4$.
This marks the first resolution of an open case since 2009, when the arXiv version of~\cite{Abel11} appeared.

\begin{theorem}\label{thm:main}
Every finite point set $P$ of size at~least~$10^{11055931}$ contains 4~collinear points or $\omega(\vis(P)) \geqslant 6$. 
\end{theorem}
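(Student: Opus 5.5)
The plan is a Ramsey-type reduction followed by a geometric case analysis. Let $P$ be a finite point set with no four collinear points, and suppose $\omega(\vis(P)) \leqslant 5$. The absence of four collinear points gives strong local control. Every blocked pair $p,p'$ is blocked by exactly one point $q$, since two blockers together with $p,p'$ would be four collinear points. Every line meets $P$ in at most three points, so the ``blocking structure'' is a partial linear 3-uniform hypergraph whose triples are the collinear triples, each with a designated middle point. Non-edges of $\vis(P)$ are exactly the pairs of extreme points of collinear triples.

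The first step extracts a large subset $Q\subseteq P$ that is both in general position and highly structured. I would use the Erdős–Szekeres theorem and a hypergraph Ramsey argument, colouring triples and 4-tuples of $P$ by order type and by ``who blocks whom'' patterns. A natural intermediate target is the following. Since $\vis(P)$ has bounded clique number, each point has few visible neighbours relative to $|P|$ in any dense region. Hence Ramsey for $\vis(P)$ (clique number $\leqslant 5$) yields a large independent set $I$ of $\vis(P)$. An independent set is a set of points pairwise blocked, each blocker lying outside $I$ or inside it. By Erdős–Szekeres applied inside $I$, I pass to a large subset $I'$ in convex position, again pairwise non-visible. Each pair of $I'$ then has a unique blocker, and by linearity these blockers are mostly distinct. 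This gives $\binom{|I'|}{2}$ blockers, essentially in bijection with the pairs.

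The second step is the core geometric lemma. A large convex independent set $I'$ with its blockers $b(x,y)$ forces either four collinear points or six pairwise visible points among $I'\cup\{b(x,y)\}$ and second-level blockers. The intended argument colours the 4-tuples of $I'$ (in convex order $x_1<x_2<x_3<x_4$) by the incidence pattern of the blockers $b(x_i,x_j)$. For instance, it records whether $b(x_1,x_3)$ and $b(x_2,x_4)$ see each other and which points block them, and whether a blocker of one pair lies on the segment of another. Ramsey theory then gives a large monochromatic convex subsequence. For each homogeneous pattern I would exhibit six points, chosen among the blockers $b(x_i,x_{i+1})$ for consecutive indices together with a few $x_i$. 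Any two of them that failed to see each other would need a third collinear point, and by homogeneity and linearity this would create a line with four points of $P$. The case where blockers of blockers are needed is handled by iterating the construction a bounded number of times. This iteration explains the tower-free but enormous bound $10^{11055931}$: each Ramsey/Erdős–Szekeres application roughly exponentiates, and a handful are stacked.

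The main obstacle is the second step, finding six \emph{pairwise} visible points rather than five. The known $k=5$ proofs use that a blocker lies between its pair, which controls a few incidences. For six points one must rule out all the ways a blocked pair among the chosen six could be blocked by a point outside the structured set. I would first try to choose the six points as blockers of nested or consecutive pairs so that any potential blocker of two of them is forced onto a line already containing three of our points. Failing that, I would use the known $k=5$ result inside the region where such blockers live and bootstrap by adding one more point.
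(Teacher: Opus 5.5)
Your proposal has a genuine gap. The second step is where the whole theorem lives, and it is not proved. You describe colouring 4-tuples of $I'$ by blocker-incidence patterns, but you never show that some (let alone every) homogeneous pattern yields six pairwise visible points.

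The mechanism you suggest for that step also fails. Visibility is relative to all of $P$, whereas Ramsey homogeneity only controls the recorded configuration $I'\cup\{b(x,y)\}$, plus whatever further levels of blockers you record. Since $I'$ is independent in $\vis(P)$, a 6-clique contains at most one point of $I'$, so ``together with a few $x_i$'' is already impossible. Hence at least five of your six points are blockers. Two of them may be blocked by an arbitrary point $q\in P$ outside the recorded configuration. The line through them then carries exactly three points of $P$, so neither linearity nor homogeneity gives a contradiction. Passing to ``blockers of blockers'' only produces new points whose mutual visibility is again decided by further unrecorded points, and nothing in the proposal makes this regress stop after boundedly many rounds.

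Two auxiliary claims are also wrong. First, linearity only says that the pairs of $I'$ blocked by a given point form a matching, since two such pairs sharing an endpoint would give four collinear points. So it allows one point to block $\lfloor |I'|/2\rfloor$ pairs and guarantees only about $|I'|$ distinct blockers, not a near-bijection with the $\binom{|I'|}{2}$ pairs. Second, the available bounds for homogeneous sets in colourings of 4-tuples are iterated exponentials. So this route is not ``tower-free'' and would not deliver the explicit constant $10^{11055931}$.

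The missing idea is that one never needs to construct the clique. The paper orders $P$ by an injective projection, so that every interval $I$ satisfies $\vis(I)=G[I]$. The Hujter--Kisfaludi-Bak theorem ($5$-colourable visibility graphs without four collinear points have at most $2310$ vertices) then shows that every long interval is $\varepsilon_0$-far from $5$-colourable. A vertex-removal Erd\H{o}s--Simonovits stability lemma converts this into a density statement: a $K_6$-free interval of size $m\geqslant 4620$ has at least $cm^2-\frac m2$ non-visible pairs, for some $c>\frac1{10}$.

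The unique-blocker property you noticed is then used only in a global weighted count. Each non-visible pair $p_i,p_j$ gets weight $\frac1{j-i}$ and is charged to the three pairs of its collinear triple. The inequality $\frac1a+\frac1b\geqslant\frac4{a+b}$ then yields $W\leqslant\frac15 n\ln n+O(n)$. In the other direction, averaging the interval density bound over shifted partitions into intervals of each length $m$ yields $W\geqslant 2cn\ln n-O(n)$. Since $2c>\frac15$, $n$ is bounded.
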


We rely on the following weakening of~\cref{thm:main}, due to Hujter and Kisfaludi-Bak.

\begin{theorem}[\cite{Hujter14}]\label{thm:hkb}
 Every finite point set $P$ of size at~least~2311 contains 4~collinear points or $\chi(\vis(P)) \geqslant 6$.
\end{theorem}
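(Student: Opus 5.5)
The natural route is a counting argument based on how pairs get blocked, backed by an induction on the number of colours. Fix a set $P$ of $n$ points with no four collinear, and suppose for contradiction that $\vis(P)$ has a proper $5$-colouring with colour classes $C_1,\dots,C_5$.

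\textbf{Structure of a colour class.} Every line meets $P$ in at most three points. So a non-visible pair $p,p'$ has exactly one blocker $q$, lying strictly between them. Since $q$ sees both $p$ and $p'$, its colour differs from theirs. It follows that no colour class contains three collinear points: the middle one would see neither of the others, yet two of the three would have to be blocked by a point of the class itself. Hence two distinct same-coloured pairs span two distinct $3$-rich lines, each having that pair as its outer points. This gives
\[
  t_3(P) \;\geqslant\; \sum_{i=1}^{5}\binom{|C_i|}{2} \;\geqslant\; 5\binom{n/5}{2} \;\approx\; \frac{n^2}{10},
\]
where $t_3(P)$ is the number of $3$-rich lines. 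The trivial bound is $t_3(P)\leqslant \binom n2/3 \approx n^2/6$, so counting alone gives no contradiction. The proof must therefore use geometry.

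\textbf{Geometry: an extreme point and its rays.} I would argue by induction on the number of colours $k$. Let $f(k)$ be the largest size of a $k$-colourable point set with no four collinear; since the closest pair always sees each other, $f(1)=1$.

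Take a vertex $v$ of the convex hull of $P$, say of colour $1$. The other points lie on rays from $v$, each ray carrying one or two of them. The nearer point on a ray sees $v$, so it is not coloured $1$. The farther point is either visible from $v$ or has colour $1$ and is blocked by the nearer one. In either case, each colour class other than the first spans a controlled portion of the angularly ordered sequence of near points.

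I would show that the near points $N(v)$, with $|N(v)|\geqslant (n-1)/2$, contain a large subset $S$ satisfying two properties. First, visibility among points of $S$ with respect to $S$ agrees with visibility with respect to $P$. Second, $S$ uses at most four colours. A natural candidate is a sub-family of consecutive rays in which no far point of colour $1$ falls between near points. This yields a recursion $f(5)\leqslant c\cdot f(4)+c'$. For the base of the recursion, I would use the known $k\leqslant 4$ structure from Kára, Pór, and Wood, made quantitative. The numeric constant $2311$ should come out of optimising this recursion, together with a finite case analysis of configurations of $3$-rich lines through $v$.

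\textbf{Main obstacle.} The hard step is the passage to the subset $S$. Removing points can create new visibilities, because a pair blocked in $P$ may become visible in $S$. These new edges could clash with the inherited colouring, so the induction would break. My first attempt would keep only near points on rays whose far point is absent or not of colour $1$. I would then bound, by the counting inequality above, how many same-coloured pairs inside $S$ lose their blocker. Finally, I would delete one endpoint of each such pair, using the sparsity of $3$-rich lines through a hull vertex to argue that only a constant fraction of $S$ is lost.
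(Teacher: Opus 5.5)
The paper does not prove this statement: \cref{thm:hkb} is imported as a black box from Hujter and Kisfaludi-Bak. Your proposal therefore has to stand on its own, and as written it is a plan rather than a proof.

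Your opening count is right in substance, though two justifications are garbled. The middle one of three collinear points of~$P$ \emph{sees} both others, which is why a colour class has no three collinear points. The far point on a ray from~$v$ is \emph{never} visible from~$v$, and its colour is unconstrained by~$v$.

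You are also right that counting alone cannot work. In fact even the sharper harmonic count of this paper is inconclusive exactly at five colours. \cref{lem:ub-W} gives $W\leqslant\frac15 n\ln n+\frac n5$. Five-colourability of every interval only gives $e(B[I])\geqslant\frac{m^2}{10}-\frac m2$, hence, via the proof of \cref{lem:lb-W}, $W\geqslant\frac15 n\ln n-\Oh(n)$. With four colours, Tur\'an's theorem gives $e(B[I])\geqslant\frac{m^2}{8}-\frac m2$, hence $W\geqslant\frac14 n\ln n-\Oh(n)$. That already yields your base case $f(4)<\infty$ with no appeal to K\'ara, P\'or, and Wood.

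So the whole content of the theorem lies in your induction step, and that step is only announced:
\begin{itemize}
\item You never construct the set $S\subseteq N(v)$ with $\vis(S)=G[S]$. The four-colour requirement is automatic for every $S\subseteq N(v)$, since all near points see~$v$; induced visibility is the entire issue.
\item You never derive the recursion $f(5)\leqslant c\,f(4)+c'$.
\item The constant $2311$ is not obtained from anything.
\end{itemize}

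The repair you sketch does not close this gap, for three reasons.
\begin{itemize}
\item Far points of \emph{every} colour are harmful, not only those of colour~$1$. Two near points of colour~$2$ can be blocked by a far point of colour~$3$, and they become adjacent as soon as it is dropped.
\item The inequality $t_3(P)\geqslant\sum_i\binom{|C_i|}{2}$ is a \emph{lower} bound on the number of $3$-rich lines, so it cannot bound the number of such bad pairs. The $3$-rich lines through~$v$ are the two-point rays, which are not the lines carrying bad pairs.
\item Deleting an endpoint of a bad pair can unblock further same-coloured pairs of~$S$ of which it was the blocker, so the deletions cascade.
\end{itemize}
This failure of induced visibility under taking subsets is precisely what the paper sidesteps with intervals (\cref{obs:vis-interval}), and you have no substitute for it.

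Finally, nothing in your scheme uses the number five. A hull vertex, near points carrying at most $k-1$ colours, and a large induced-visibility subset would equally give $f(k)\leqslant c\,f(k-1)+c'$ for every~$k$. That is the chromatic weakening of \cref{conj:blbc} for $\ell=4$ and all~$k$, a much stronger statement than the one at hand. The missing step would therefore have to carry far more than the theorem itself. What is actually needed is a geometric argument that exploits five colours specifically, the point at which the counting above becomes tight, and that is the content of the cited work.
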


Visibility graphs of point sets are not $\chi$-bounded: they can have arbitrarily large chromatic number and clique number~6~\cite{Pfender08}.
This rules out a~general approach to~\cref{conj:blbc} solely based on showing its chromatic weakening (where $\omega$ is replaced by~$\chi$).

\paragraph*{Proof outline.}
Let $P \subseteq \real^2$ be any finite point set without four collinear points.
We set $G := \vis(P)$, and further assume that $\omega(G) < 6$.
We upper-bound $n := \abs{P}$ to prove the contrapositive of~\cref{thm:main}.

We first totally order~$P$: $p_1, p_2, \ldots, p_n \in P$ such that every interval $p_i, p_{i+1}, \ldots, p_j$ induces the same visibility graph as in~$G$.
(The order is the left-to-right order on an injective projection of $P$ onto a~line.) 
By the result of Hujter and Kisfaludi-Bak (\cref{thm:hkb}), any sufficiently long interval $I$ cannot be made 5-partite (i.e., of chromatic number at~most~5) by removing less than a~constant fraction of~$I$.
Otherwise, a~still large remaining subinterval of~$I$ contradicts~\cref{thm:hkb}.

As a~consequence of a~vertex-removal version of the Erd\H{o}s--Simonovits stability theorem (\cref{lem:turan-stab}), this implies that $G[I] = \vis(I)$ has at~most $(\frac{2}{5}-\varepsilon_1)\abs{I}^2$ edges for some constant~$\varepsilon_1 > 0$.
Passing to the complement $B := \overline{G}$, we get that $B[I]$ has at~least $c_{\ref{cor:lb-edges}}\abs{I}^2 - \Oh(\abs{I})$ edges for some constant $c_{\ref{cor:lb-edges}} > \frac{1}{10}$.
The exact value $\frac{1}{10}$ and the strict inequality will matter.

Next, we introduce the following potential, intending to show conflicting upper and lower bounds on it in terms of~$n$,
\[W := \sum\limits_{p_i p_j \in E(B), i < j} \frac{1}{j-i},\] summing the reciprocal of the positive \emph{stretch} for every nonvisible pair $p_i, p_j \in P$.
Observe that $\sum_{1 \leqslant i < j \leqslant n} \frac{1}{j-i} = n \ln n + \Oh(n)$.
Now, because $P$ has no four collinear points, we can show that every unit of potential from a~nonvisible pair $p_i, p_j$ forces four units of potential from \emph{visible} pairs; those five units are only used for $p_i, p_j$ and no other nonvisible pair.
Therefore, $W \leqslant \frac{1}{5}\,n \ln n + \Oh(n)$.

We now wish to leverage the lower bound on $\abs{E(B[I])}$ for every sufficiently large interval~$I$ to get a~lower bound on~$W$.
Partitioning $[n]$ into intervals of length~$m$, one gets $\left(\frac{n}{m}-\Oh(1)\right) \left(c_{\ref{cor:lb-edges}}m^2 - \Oh(m)\right) = c_{\ref{cor:lb-edges}}nm - \Oh(n+m^2)$ edges within these intervals.  
Note that \[A_m := \sum\limits_{p_ip_j \in E(B),~j-i \in [m-1]} \left(1-\frac{j-i}{m}\right)\]
is the expected number of edges of~$B$ within the intervals, thus in particular, $A_m \geqslant c_{\ref{cor:lb-edges}}nm - \Oh(n+m^2)$.
It can be shown that $\sum_{m = j-i+1}^n \frac{2}{m^2-1} \left(1-\frac{j-i}{m}\right) \leqslant \frac{1}{j-i}$.
By summing this inequality over every nonvisible pair $p_i, p_j$ and swapping the summation order, we get \[W \geqslant \sum_{m=2}^n \frac{2}{m^2-1}\,A_m.\]
With the preceding lower bound on~$A_m$, this gives $W \geqslant 2c_{\ref{cor:lb-edges}}n \sum_{m=2}^n \frac{1}{m} - \Oh(n) = 2c_{\ref{cor:lb-edges}}n \ln n - \Oh(n)$.
We conclude that $n$ cannot be arbitrarily large from the upper bound on~$W$ of the previous paragraph, since $2c_1 > \frac{1}{5}$.
The section organization follows the paragraphs of the outline.

\section{Intervals and lower bound on distance to 5-colorability} 

We use $[n]$ as a~shorthand for $\{1, \ldots, n\}$.
We use the standard graph-theoretic notation.
For any graph $H$, $V(H)$ and $E(H)$ denote the vertex set and edge set, respectively, of~$H$.
If $S \subseteq V(H)$, then $H[S]$ denotes the subgraph of~$H$ induced by~$S$, and $H-S := H[V(H) \setminus S]$.

Fix a~non-vertical line $L$ such that the orthogonal projection $\pi$ of~$P$ onto~$L$ gives $|P|$ distinct points.
Let $p_1, \ldots, p_n$ enumerate the points of~$P$ by the left-to-right order of their projections $\pi(p_1), \ldots, \pi(p_n)$.
We now call any set of consecutive points $\{p_i, p_{i+1}, \ldots, p_j\}$, with $i, j \in [n]$ and $i \leqslant j$, an \emph{interval} of~$P$.

\begin{observation}\label{obs:vis-interval}
  For every interval $I$ of~$P$, it holds that $\vis(I) = G[I]$.
\end{observation}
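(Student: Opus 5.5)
We show that for any two distinct points $p_a, p_b \in I$, with $a < b$, the pair sees each other in $P$ if and only if it sees each other in $I$. Both graphs have vertex set $I$, so this gives $\vis(I) = G[I]$. The argument compares the two blocking conditions directly, using only that $\pi$ is an orthogonal projection onto the line $L$ and is therefore affine along segments.

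First, if $p_a, p_b$ see each other in $P$, they see each other in $I$. Indeed, a point of $I$ blocking the pair would also be a point of $P$ blocking it, since $I \subseteq P$.

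Conversely, suppose some $q \in P \setminus \{p_a, p_b\}$ lies on the segment $\overline{p_a p_b}$. We show that $q \in I$. The projection $\pi$ is affine, so $\pi(q)$ lies on the segment $\overline{\pi(p_a)\pi(p_b)}$ of $L$. Since $\pi$ is injective on $P$, the point $\pi(q)$ differs from both endpoints, so it lies strictly between $\pi(p_a)$ and $\pi(p_b)$ in the left-to-right order on $L$. Hence $q = p_c$ for some index $c$ with $a < c < b$. Writing $I = \{p_i, \ldots, p_j\}$, we have $i \leqslant a < c < b \leqslant j$, so $p_c \in I$. The pair is therefore blocked in $I$ as well, which proves the equivalence.

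There is no real obstacle here. The one point needing care is that $\pi(q)$ must land strictly between the endpoints, which uses both that $\pi$ is affine and that it is injective on $P$.
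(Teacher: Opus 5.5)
Your proof is correct and follows the paper's argument: visibility in $P$ implies visibility in $I$ because $I \subseteq P$, and conversely a blocker $q \in P$ has $\pi(q)$ on the segment between the projections of the pair, so $q \in I$. Your emphasis on $\pi(q)$ lying strictly between the endpoints is harmless but not needed: an index between $a$ and $b$ inclusive already places $q$ in $I$.
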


\begin{proof}
  Two points of $I$ that are visible in $P$ are a~fortiori visible in~$I$.
  Conversely, if two points $p_i, p_j$ of~$I$ are blocked by a~point $q \in P$, then the projection $\pi(q)$ lies on the line segment $\pi(p_i)\pi(p_j)$.
  Thus $q \in I$, and $p_i$ and $p_j$ do not see each other in~$I$ either.
\end{proof}

Let $\hkb := 2310$, where the subscript stands for the authors of~\cref{thm:hkb}.
The latter theorem can be rephrased as follows: every finite point set with no four collinear points and with a~5-colorable visibility graph has size at~most~$\hkb$. 
We set \[\varepsilon_0 := \frac{1}{2(\hkb+1)}.\] 
We show that, due to \cref{thm:hkb}, the visibility graph of every sufficiently large interval of~$P$ cannot be made 5-colorable by removing less than an $\varepsilon_0$ fraction of its vertices.  

\begin{lemma}\label{lem:distance-5-col}
  For every interval $I$ of~$P$ with $\abs{I} \geqslant 2 \hkb$, and every $Z \subseteq I$,
  \[\chi(G[I]-Z) \leqslant 5~~\text{implies that}~~\abs{Z} \geqslant \varepsilon_0 \abs{I}.\]
\end{lemma}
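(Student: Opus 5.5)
We argue by contraposition, following the outline: assume $\abs{Z} < \varepsilon_0 \abs{I}$ and $\chi(G[I]-Z) \leqslant 5$, and find a long subinterval of~$I$ avoiding~$Z$ whose visibility graph is 5-colorable. This contradicts \cref{thm:hkb} in its rephrased form: a point set with no four collinear points and 5-colorable visibility graph has at most $\hkb$ points.

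\textbf{Step 1 (pigeonhole).} The points of $Z$ split $I$ into at most $\abs{Z}+1$ maximal subintervals $J_0, \ldots, J_{\abs{Z}}$, some possibly empty, each disjoint from~$Z$. Their total size is $\abs{I} - \abs{Z}$. Hence some $J$ among them has
\[\abs{J} \geqslant \frac{\abs{I}-\abs{Z}}{\abs{Z}+1}.\]
Using $\abs{Z} < \varepsilon_0\abs{I} = \frac{\abs{I}}{2(\hkb+1)}$, we have $\abs{Z}+1 < \frac{\abs{I}}{2(\hkb+1)} + 1$. Since $\abs{I} \geqslant 2\hkb$, we also have $\abs{I}-\abs{Z} > \abs{I}\bigl(1 - \frac{1}{2(\hkb+1)}\bigr)$.

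The bound we want is $\abs{J} \geqslant \hkb+1$, that is, $\abs{I}-\abs{Z} \geqslant (\hkb+1)(\abs{Z}+1)$. This rearranges to $\abs{I} \geqslant (\hkb+2)\abs{Z} + \hkb + 1$. With $\abs{Z} < \frac{\abs{I}}{2(\hkb+1)}$, the right-hand side is less than $\frac{\hkb+2}{2(\hkb+1)}\abs{I} + \hkb + 1$, which is roughly $\frac{\abs{I}}{2} + \hkb + 1$. That is at most $\abs{I}$ as soon as $\abs{I} \gtrsim 2\hkb + 2$.

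The boundary case $\abs{I}$ close to $2\hkb$ needs care, and this bookkeeping is the only delicate point. Integrality helps there: when $\abs{I} < 2(\hkb+1)$ we have $\varepsilon_0\abs{I} < 1$, so $\abs{Z} = 0$, and then $J = I$ has size at least $2\hkb > \hkb$ directly. For $\abs{I} \geqslant 2(\hkb+1)$, we use $\abs{Z} \leqslant \lceil \varepsilon_0 \abs{I}\rceil - 1$ and check the inequality $\abs{I} \geqslant (\hkb+2)\abs{Z} + \hkb + 1$ with a short computation. If the constant turns out marginally tight, one can use the slack from $\abs{J}$ only needing to exceed $\hkb$, not $\hkb+1$ beyond integrality.

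\textbf{Step 2 (transfer to the subinterval).} $J$ is an interval of~$P$, so \cref{obs:vis-interval} gives $\vis(J) = G[J]$. Since $J \subseteq I \setminus Z$, the graph $G[J]$ is an induced subgraph of $G[I]-Z$, hence $\chi(\vis(J)) \leqslant 5$. Moreover $J \subseteq P$ has no four collinear points. Then \cref{thm:hkb} forces $\abs{J} \leqslant \hkb$, contradicting $\abs{J} \geqslant \hkb+1$.

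The only real obstacle is the arithmetic in Step 1 near $\abs{I} = 2\hkb$. The factor $2$ in $\varepsilon_0$ is chosen precisely so that the pigeonhole bound clears $\hkb$, and the case split above on whether $Z$ is empty handles the boundary.
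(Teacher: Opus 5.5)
Your proof is correct and is the paper's argument in contrapositive form. The paper also splits $I\setminus Z$ into at most $|Z|+1$ intervals and applies \cref{obs:vis-interval} and \cref{thm:hkb} to each. It then argues directly: every piece has at most $\hkb$ points, so $|I|-|Z|\leqslant(|Z|+1)\hkb$, whence $|Z|\geqslant\frac{|I|-\hkb}{\hkb+1}\geqslant\frac{|I|}{2(\hkb+1)}=\varepsilon_0|I|$. The last step is exactly $|I|\geqslant 2\hkb$.

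The only unfinished part is the arithmetic of Step~1, which you deferred with a hedge. It is easier than you feared. You only need $\frac{|I|-|Z|}{|Z|+1}>\hkb$, since integrality of $|J|$ then gives $|J|\geqslant\hkb+1$. This is equivalent to $|Z|<\frac{|I|-\hkb}{\hkb+1}$, which follows from $|Z|<\frac{|I|}{2(\hkb+1)}\leqslant\frac{|I|-\hkb}{\hkb+1}$ with no case split or rounding.

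Your stronger target $|I|\geqslant(\hkb+2)|Z|+\hkb+1$ also holds. For $|Z|\geqslant 1$, integrality gives $|I|\geqslant 2(\hkb+1)|Z|+1\geqslant(\hkb+2)|Z|+\hkb+1$, and $|Z|=0$ is trivial. Note, however, that your real-valued estimate only reaches $|I|\geqslant 2\hkb+5$, not $2\hkb+2$ as claimed. So the integrality step you proposed is genuinely needed just above the boundary.
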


\begin{proof}
  Indeed, assume that $\chi(G[I]-Z) \leqslant 5$.
  Let $z := \abs{Z}$.
  The set $Z$ cuts the interval $I$ in at~most~$z+1$ subintervals.
  More precisely, $I \setminus Z$ is the disjoint union of at~most~$z+1$ intervals $I_1, \ldots, I_h$ of~$P$.

  For every $j \in [h]$, the interval $I_j$ has no four collinear points, by assumption on~$P$.
  Furthermore $\vis(I_j) = G[I_j]$ by~\cref{obs:vis-interval}.
  Therefore, \[\chi(\vis(I_j)) = \chi(G[I_j]) \leqslant \chi(G[I]-Z) \leqslant 5,\]
  where the first inequality holds since $G[I_j]$ is an induced subgraph of~$G[I]-Z$.
  (Note that it is irrelevant that the latter graph is not necessarily a~visibility graph.)

  The previous paragraph implies, by~\cref{thm:hkb}, that $\abs{I_j} \leqslant \hkb$.
  Consequently, \[\abs{I} - z \leqslant h \hkb \leqslant (z+1) \hkb.\]
  Thus, \[z \geqslant \frac{\abs{I} - \hkb}{\hkb+1} \geqslant \frac{\abs{I}}{2} \cdot \frac{1}{\hkb+1} = \varepsilon_0 \abs{I},\]
  where the second inequality holds because $\abs{I} \geqslant 2 \hkb$.
\end{proof}

\section{Turán stability and edge density}

For any graph $H$, we use $e(H)$ as a~shorthand for $\abs{E(H)}$, i.e., the number of edges of~$H$.
We denote by $\ex(m,K_6)$ the maximum number of edges of a~$K_6$-free (i.e., without a~6-vertex clique) $m$-vertex graph.  
We need a~vertex-removal version of the Erd\H{o}s--Simonovits stability theorem~\cite{Erdos66,Erdos67,Simonovits68}.
The proof of \cite[Lemma 2.3]{Popielarz18} gives the following explicit form for~$r=5$.

\begin{lemma}[{\cite[Lemma 2.3]{Popielarz18}} for $r=5$]\label{lem:turan-stab}
  For every $\varepsilon \in (0,1/3750)$, every $K_6$-free graph $H$ 
  \[\text{with}~m \geqslant 20~\text{vertices and more than}~\ex(m,K_6)-\varepsilon m^2~\text{edges}\]
  admits a~set $Z$ of fewer than $3500 \varepsilon m$ vertices such that $\chi(H-Z) \leqslant 5$. 
\end{lemma}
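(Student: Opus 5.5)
We prove this by re-running the standard argument behind Popielarz–Sahasrabudhe–Snyder's Lemma 2.3 with $r=5$ and keeping the constants explicit. There are three steps: min-degree cleaning, a stability (partition) step, and removal of the badly placed vertices.

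\emph{Step 1: min-degree cleaning.} Set $\delta^\ast := (1-\frac{1}{5})m' - c\sqrt{\varepsilon}\,m$ for the current order $m'$. While the current graph has a vertex of degree below $\delta^\ast$, delete it and add it to a set $Z_1$. Recall $\ex(m,K_6) \le \frac{2}{5}m^2$. Each deletion lowers the edge count by less than the Turán-graph increment, roughly $\frac{4}{5}m'$, minus a margin. After $t$ deletions the remaining graph on $m-t$ vertices has more than $\ex(m,K_6) - \varepsilon m^2 - \sum(\text{degrees})$ edges. Comparing with $\ex(m-t,K_6)$ shows that $t$ cannot exceed about $\sqrt{\varepsilon}\,m$, and more carefully $O(\varepsilon m)$ once the margin is chosen linearly in $\varepsilon$. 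I would instead take the threshold $(\frac{4}{5} - C\varepsilon)m'$, which gives $|Z_1| \le \varepsilon m / (\text{const})$. The remaining graph $H'$ is $K_6$-free, has minimum degree at least $(\frac45 - C\varepsilon)|H'|$, and still has nearly $\frac25|H'|^2$ edges.

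\emph{Step 2: partition.} In $H'$, use the high minimum degree to build a $K_5$ greedily, since common neighbourhoods of $j$ vertices have size at least $(1 - \frac{j}{5} - jC\varepsilon)|H'| > 0$ for $j \le 4$. Fix a $K_5$ with vertices $v_1,\dots,v_5$. No vertex is adjacent to all five, by $K_6$-freeness. Each vertex therefore misses at least one $v_i$, so the sum of degrees into $\{v_1,\dots,v_5\}$ forces most vertices to miss exactly one $v_i$. Put $u$ in class $V_i$ if $u$ is nonadjacent to $v_i$. This is the main computation. Double counting $\sum_i \deg(v_i) \ge 5(\frac45 - C\varepsilon)|H'|$ shows that at most $5C\varepsilon|H'|$ vertices miss two or more of the $v_i$; call these $Z_2$. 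The classes are then an approximate Turán partition, but a class need not yet be independent.

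\emph{Step 3: making the classes independent.} A vertex $u \in V_i \setminus Z_2$ that has many neighbours inside $V_i$ can be combined with the common neighbourhood structure to produce a $K_6$. Concretely, pick a vertex $w \in V_i$ adjacent to $u$. Each of $u,w$ has non-neighbourhood of size at most $(\frac15 + C\varepsilon)|H'|$. So $u,w$ have common neighbours in every $V_j$ with $j \ne i$, provided the $V_j$ have size close to $\frac15|H'|$, which follows from the edge count. Iterating the common neighbourhood argument finds a $K_4$ across the other four classes, giving a $K_6$. This is the main obstacle: the minimum-degree slack of $C\varepsilon|H'|$ must survive four nested intersections. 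It does so because each intersection loses only $O(\varepsilon)|H'|$ while the classes have size about $|H'|/5$, and the condition $\varepsilon < 1/3750$ is exactly what keeps these quantities positive. It follows that every edge inside a class has an endpoint in a small exceptional set, whose size is bounded by $O(\varepsilon m)$ via the missing-edge count. Let $Z_3$ be that set.

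Finally, set $Z := Z_1 \cup Z_2 \cup Z_3$. Then $H - Z$ is $5$-partite, and summing the three bounds with the explicit constants should give $|Z| < 3500\varepsilon m$. The hypothesis $m \ge 20$ absorbs the rounding in $\ex(m,K_6)$.
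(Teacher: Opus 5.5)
\textbf{There is a genuine gap: the minimum-degree threshold in Step 1 is scaled the wrong way.}

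Suppose you delete vertices of degree below $(\frac45-\eta)m'$. Each deletion raises $e(\cdot)-\ex(\cdot,K_6)$ by at least about $\eta m'$, since $\ex(m',K_6)-\ex(m'-1,K_6)=m'-\lceil m'/5\rceil\approx\frac45 m'$. This quantity starts above $-\varepsilon m^2$ and can never become positive. So $t\le m/2$ deletions only give $\eta\,t\,\frac m2\le\varepsilon m^2$ up to lower-order terms, i.e.\ $|Z_1|\le 2\varepsilon m/\eta$ roughly.

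With $\eta=C\varepsilon$ this is about $2m/C$, a constant fraction of $m$ that does not shrink with $\varepsilon$. Your remark that the bound becomes $O(\varepsilon m)$ ``once the margin is chosen linearly in $\varepsilon$'' has the dependence backwards: a smaller margin allows more deletions.

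This is not just a weak estimate. Take $T_5(m)$ and a set $S$ of $m/(2C)$ vertices spread evenly over its parts. Delete slightly more than $C\varepsilon m$ edges from each $s\in S$ to vertices outside $S$. The result is $5$-partite with about $\ex(m,K_6)-\varepsilon m^2/2$ edges. Yet your cleaning, run round-robin over the parts, deletes all of $S$: each full round raises every remaining $s$'s slack below the threshold by $5C\varepsilon$. Once $\varepsilon<1/(7000C)$, $|S|$ exceeds $3500\varepsilon m$.

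Conversely, your Step 2 count gives $|Z_2|\le 5\eta|H'|$, which is $O(\varepsilon m)$ only when $\eta=O(\varepsilon)$. So Steps 1 and 2 demand opposite choices of $\eta$. Balancing them ($\eta\approx\sqrt{\varepsilon}$, your first attempt) yields only the classical $O(\sqrt{\varepsilon}\,m)$ vertex bound, not the linear bound the lemma asserts.

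\textbf{The missing idea is to keep the margin a fixed constant and then show the cleaned graph is $5$-colourable outright, with no further exceptional set.} For instance, delete vertices of degree at most $\frac{11}{14}m'$. The margin is then $\frac45-\frac{11}{14}=\frac1{70}$, so the computation above gives $|Z_1|=O(\varepsilon m)$. Small $m$ and rounding need some care; for example, $\varepsilon m^2<1$ forces $H=T_5(m)$. The remaining $K_6$-free graph has minimum degree exceeding $\frac{11}{14}=\frac{3r-4}{3r-1}$ times its order, so it is $5$-colourable by the Andr\'asfai--Erd\H{o}s--S\'os theorem, and $Z=Z_1$ suffices. (The paper does not reprove the lemma; it takes it, with these explicit constants, from the proof of Lemma 2.3 of Popielarz--Sahasrabudhe--Snyder.)

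Two side remarks on your Step 3. First, it is unnecessary: if $u,w\in V_i\setminus Z_2$ are adjacent, then $\{u,w\}\cup\{v_j: j\neq i\}$ is a $K_6$, so the classes $V_i\setminus Z_2$ are already independent. Second, the nested-intersection argument you sketch there would not go through from minimum degree alone. Each intersection can lose about $\frac15|H'|$ rather than $O(\varepsilon)|H'|$, so the final common neighbourhood has no guaranteed vertex.
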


We get the following useful consequence from the previous two lemmas.

\begin{lemma}\label{lem:ub-edges}
  Set $\varepsilon_1 := \varepsilon_0/3500$.
  For every interval $I$ of~$P$ with $m := \abs{I} \geqslant 2 \hkb$,
  \[e(G[I]) \leqslant \ex(m,K_6) - \varepsilon_1 m^2.\]
\end{lemma}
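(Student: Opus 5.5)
The plan is to argue by contradiction and combine \cref{lem:turan-stab} with \cref{lem:distance-5-col}. Fix an interval $I$ of~$P$ with $m := \abs{I} \geqslant 2\hkb$, and suppose that $e(G[I]) > \ex(m,K_6) - \varepsilon_1 m^2$. We want to feed $H := G[I]$ into \cref{lem:turan-stab} with $\varepsilon := \varepsilon_1$, so we first check its hypotheses.

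\begin{itemize}
\item $H$ is $K_6$-free. This is the standing assumption $\omega(G) < 6$, and it passes to the induced subgraph~$G[I]$.
\item $m \geqslant 20$. This holds since $m \geqslant 2\hkb = 4620$.
\item $\varepsilon_1 \in (0, 1/3750)$. Here $\varepsilon_1 = \varepsilon_0/3500 = \frac{1}{2 \cdot 2311 \cdot 3500}$, which is positive and far below $1/3750$.
\item $H$ has more than $\ex(m,K_6) - \varepsilon_1 m^2$ edges. This is exactly our contradiction hypothesis.
\end{itemize}

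\cref{lem:turan-stab} then gives a set $Z \subseteq I$ with $\abs{Z} < 3500\,\varepsilon_1 m = \varepsilon_0 m$ such that $\chi(G[I]-Z) \leqslant 5$.

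On the other hand, \cref{lem:distance-5-col} applies to the same $I$, since $\abs{I} \geqslant 2\hkb$. It says that any $Z$ with $\chi(G[I]-Z) \leqslant 5$ satisfies $\abs{Z} \geqslant \varepsilon_0 \abs{I} = \varepsilon_0 m$. This contradicts $\abs{Z} < \varepsilon_0 m$, so $e(G[I]) \leqslant \ex(m,K_6) - \varepsilon_1 m^2$, as claimed.

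There is no real obstacle here; the argument is bookkeeping. The only points needing care are the following.

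\begin{itemize}
\item The contrapositive must be set up with the right strictness. The stability lemma needs strictly more than $\ex(m,K_6) - \varepsilon m^2$ edges and gives strictly fewer than $3500\varepsilon m$ deleted vertices. It must be paired with the non-strict bound $\abs{Z} \geqslant \varepsilon_0 m$ of \cref{lem:distance-5-col}.
\item The choice $\varepsilon_1 = \varepsilon_0/3500$ is made precisely so that $3500\,\varepsilon_1 m = \varepsilon_0 m$.
\item One must confirm that the numerical side conditions of \cref{lem:turan-stab} ($m \geqslant 20$ and $\varepsilon < 1/3750$) hold, which they do by a wide margin.
\end{itemize}
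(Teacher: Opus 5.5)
Your proposal is correct and is essentially the paper's proof. The paper also applies \cref{lem:turan-stab} to $H = G[I]$ with $\varepsilon = \varepsilon_1$ and uses \cref{lem:distance-5-col} to rule out a set $Z$ with $\abs{Z} < \varepsilon_0 m$ and $\chi(H-Z) \leqslant 5$; your hypothesis checks and strictness bookkeeping are accurate.
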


\begin{proof}
  We apply \cref{lem:turan-stab} with $\varepsilon := \varepsilon_1 < 1/3750$ and $H := \vis(I) = G[I]$.
  The conclusion that there is some set $Z \subseteq V(H)$ such that $\abs{Z} < \varepsilon_0 m$ and $\chi(H-Z) \leqslant 5$ is ruled out by \cref{lem:distance-5-col}.
  Since $H$ is $K_6$-free (as an induced subgraph of~$G$) and $m = |V(H)| \geqslant 20$, we have $e(H) \leqslant \ex(m,K_6)-\varepsilon_1 m^2$.
\end{proof}

We now translate \cref{lem:ub-edges} into a~lower bound for $B := \overline G$, the complement of~$G$.

\begin{corollary}\label{cor:lb-edges}
  Set $c_{\ref{cor:lb-edges}} := \frac{1}{10} + \varepsilon_1$.
  For every interval $I$ of~$P$ with $m := \abs{I} \geqslant 2 \hkb$,
  \[e(B[I]) \geqslant c_{\ref{cor:lb-edges}} m^2 - \frac{m}{2}.\]
\end{corollary}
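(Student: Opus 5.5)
The plan is to complement the upper bound of \cref{lem:ub-edges} and control $\ex(m,K_6)$ by Turán's theorem. Since $B = \overline{G}$ and $B[I]$ is the complement of $G[I]$ on the vertex set $I$, we have
\[e(B[I]) = \binom{m}{2} - e(G[I]).\]
By \cref{lem:ub-edges}, which applies because $m \geqslant 2\hkb$, we get $e(G[I]) \leqslant \ex(m,K_6) - \varepsilon_1 m^2$. It therefore suffices to show that $\ex(m,K_6) \leqslant \frac{2}{5}m^2$.

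For this I will invoke Turán's theorem. The extremal $K_6$-free graph on $m$ vertices is the balanced complete 5-partite Turán graph, and its number of edges is at most $\left(1-\frac{1}{5}\right)\frac{m^2}{2} = \frac{2}{5}m^2$. This is the standard bound $\ex(m,K_{r+1}) \leqslant (1-\frac1r)\frac{m^2}{2}$, which follows from the Cauchy--Schwarz (convexity) inequality on the part sizes: the number of edges is $\frac12\left(m^2-\sum_i a_i^2\right) \leqslant \frac12\left(m^2 - \frac{m^2}{5}\right)$.

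Combining the two bounds gives
\[e(B[I]) \geqslant \frac{m^2}{2}-\frac{m}{2} - \frac{2}{5}m^2 + \varepsilon_1 m^2 = \left(\frac{1}{10}+\varepsilon_1\right)m^2 - \frac{m}{2},\]
which is exactly the claimed bound with $c_{\ref{cor:lb-edges}} = \frac{1}{10}+\varepsilon_1$.

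None of these steps is a real obstacle. The only point needing care is to use the clean Turán bound $\frac{2}{5}m^2$ rather than an exact Turán number, so that the only lower-order loss is the $-\frac m2$ coming from $\binom m2 = \frac{m^2}{2}-\frac m2$.
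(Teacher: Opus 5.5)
Your proposal is correct and follows the paper's proof: complement the bound of \cref{lem:ub-edges}, use Tur\'an's theorem to get $\ex(m,K_6) \leqslant \frac{2}{5}m^2$, and expand $\binom{m}{2} = \frac{m^2}{2}-\frac{m}{2}$. Your short justification of the Tur\'an-graph edge count by convexity on the part sizes is a harmless addition, since the paper simply cites Tur\'an's theorem.
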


\begin{proof}
  By~\cref{lem:ub-edges}, \[e(G[I]) \leqslant \ex(m,K_6) - \varepsilon_1 m^2 \leqslant \left(1-\frac{1}{5}\right)\frac{m^2}{2} - \varepsilon_1 m^2 = \frac{2}{5}\,m^2 - \varepsilon_1 m^2,\]
  where the second inequality is by Tur\'an's theorem~\cite{Turan41}.
  Thus,
  \[e(B[I]) \geqslant \binom{m}{2} - \frac{2}{5}\,m^2 + \varepsilon_1 m^2 = \frac{m^2}{2} - \frac{2}{5}\,m^2 + \varepsilon_1 m^2 - \frac{m}{2} = \left(\frac{1}{10}+\varepsilon_1\right)m^2 - \frac{m}{2}.\qedhere\]
\end{proof}

The constant $\frac{1}{10}$ and the strict inequality $c_{\ref{cor:lb-edges}} > \frac{1}{10}$ will turn out to be crucial.

\section{Harmonic upper bound}

We define the weight of a~pair of points $p_i, p_j \in P$, with $i < j$, by $w(p_i,p_j) := \frac{1}{j-i}$.
Note that every weight is positive.
We introduce \[W := \sum\limits_{\substack{p_ip_j \in E(B)\\i<j}} w(p_i,p_j) = \sum\limits_{\substack{p_ip_j \in E(B)\\i<j}} \frac{1}{j-i}.\]
Thus, $W$ is the sum of weights of pairs of points that do not see each other in~$P$.
The rest of the proof consists of showing conflicting upper and lower bounds on~$W$, resulting in an upper bound on~$n = \abs{P}$.

In this section, we show the following upper bound on~$W$.
The proof hinges on the absence of four collinear points in~$P$.

\begin{lemma}\label{lem:ub-W}
  $W \leqslant \frac{1}{5}\,n \ln n + \frac{n}{5}$.
\end{lemma}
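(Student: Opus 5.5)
We charge each nonvisible pair to a set of pairs whose total weight is at least five times its own, with these sets pairwise disjoint. The upper bound then follows from the total weight of all pairs:
\[\sum_{1 \leqslant i<j\leqslant n} \frac{1}{j-i} = \sum_{d=1}^{n-1}\frac{n-d}{d} \leqslant n H_{n-1} - (n-1) \leqslant n\ln n + 1.\]
Concretely, if every nonvisible pair $p_i,p_j$ with $i<j$ can be assigned a set $S(p_i,p_j)$ of pairs with $w(S(p_i,p_j)) \geqslant 5\,w(p_i,p_j)$, and the sets are pairwise disjoint, then $5W \leqslant n\ln n+1$. This gives the claimed bound, and the slack $n/5$ absorbs lower-order terms.

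\textbf{Construction of the charging.} Let $p_i,p_j$ be nonvisible with $i<j$. The segment $\overline{p_ip_j}$ contains a blocking point of~$P$. Since $P$ has no four collinear points, the line through $p_i,p_j$ contains exactly three points of~$P$, so there is exactly one blocker $p_k$, with $i<k<j$ by the projection argument of \cref{obs:vis-interval}. Both pairs $p_ip_k$ and $p_kp_j$ are visible, because no fourth point lies on that line.

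Write $a:=k-i$ and $b:=j-k$, so $j-i=a+b$. The two visible pairs have weight
\[\frac1a+\frac1b=\frac{a+b}{ab}\geqslant \frac{4}{a+b},\]
by AM--HM. Together with the nonvisible pair itself, this is at least $5\,w(p_i,p_j)$. So set $S(p_i,p_j):=\{p_ip_j,\,p_ip_k,\,p_kp_j\}$.

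\textbf{Disjointness.} This step is the key check and the one I expect to need the most care. Each visible pair $p_ip_k$ determines a line, and that line contains at most one further point of~$P$. Hence a visible pair lies in $S(\cdot)$ for at most one nonvisible pair, namely the collinear triple on its line, if that triple has its middle point among the two. Distinct nonvisible pairs lie on distinct lines or on the same line, but a line holds only one triple and hence only one nonvisible pair (its outer pair). So the sets $S(\cdot)$ are indeed pairwise disjoint. Nonvisible pairs also never appear as visible members of another set.

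\textbf{Conclusion.} Summing over nonvisible pairs gives
\[5W \leqslant \sum_{\text{all pairs}} w \leqslant n\ln n + n,\]
which yields $W\leqslant \frac15 n\ln n+\frac n5$.
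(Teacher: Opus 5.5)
Your proposal is correct and follows essentially the same route as the paper. You use the unique blocker $p_k$, charge $w(p_i,p_j)$ to the three pairs on the line through $p_i,p_j$, get the factor~$5$ from the AM--HM bound $\frac1a+\frac1b\geqslant\frac{4}{a+b}$, derive disjointness from the absence of four collinear points, and finish with the harmonic bound on the total weight of all pairs. Your evaluation $\sum_{d=1}^{n-1}\frac{n-d}{d}=nH_{n-1}-(n-1)\leqslant n\ln n+1$ is slightly sharper than the paper's $n\ln n+n$, but either suffices.
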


\begin{proof}
  Every nonvisible pair $p_i, p_j \in P$ with $i < j$ admits a~\emph{unique} blocker $p_k \in P$, for otherwise there would be four points of~$P$ on a~single line.
  We denote by $k(i,j)$ the index of this unique blocker.
  Furthermore, we have $i < k(i,j) < j$.
  We charge the weight of $(p_i, p_j)$ to the weights of the three pairs $(p_i, p_{k(i,j)})$, $(p_{k(i,j)}, p_j)$, and $(p_i, p_j)$.
  Note that these three pairs cannot be charged by another nonvisible pair: this would again imply the existence of four collinear points in~$P$.

  We set $a := k(i,j) - i$ and $b := j - k(i,j)$.
  Thus, $a+b = j - i$.
  Therefore, \[w(p_i,p_{k(i,j)})+w(p_{k(i,j)},p_j)+w(p_i,p_j) = \frac{1}{a} + \frac{1}{b} + \frac{1}{a+b}.\]
  Since $(a-b)^2 \geqslant 0$, we have $a^2+b^2 \geqslant 2ab$, so $\frac{a}{b} + \frac{b}{a} \geqslant 2$, as $a, b > 0$.
  This implies that \[\frac{a+b}{b} + \frac{a+b}{a} \geqslant 4,~~\text{thus}~~\frac{1}{a} + \frac{1}{b} \geqslant \frac{4}{a+b}.\]
  Therefore,
  \[w(p_i,p_{k(i,j)})+w(p_{k(i,j)},p_j)+w(p_i,p_j) \geqslant \frac{4}{a+b} + \frac{1}{a+b} = \frac{5}{a+b} = 5\,w(p_i,p_j).\]
  We conclude that
  \[W = \sum\limits_{\substack{p_ip_j \in E(B)\\i<j}} w(p_i,p_j) \leqslant \frac{1}{5} \sum\limits_{\substack{p_ip_j \in E(B)\\i<j}} \left(w(p_i,p_{k(i,j)})+w(p_{k(i,j)},p_j)+w(p_i,p_j)\right) \leqslant \frac{1}{5} \sum\limits_{1 \leqslant i < j \leqslant n} \frac{1}{j-i},\]
  where the second inequality holds because the weights $w(p_i,p_{k(i,j)}), w(p_{k(i,j)},p_j), w(p_i,p_j)$ are used only once, by the argument in the first paragraph.
  Finally,
  \[\sum\limits_{1 \leqslant i < j \leqslant n} \frac{1}{j-i} \leqslant n \sum\limits_{1 \leqslant i \leqslant n-1} \frac{1}{i} \leqslant n \ln n + n,~~\text{and}~~W \leqslant \frac{1}{5}\,n \ln n + \frac{n}{5}.\qedhere\]
\end{proof}

Again, the exact coefficient $\frac{1}{5}$ of $n \ln n$ (and the fact that it is twice $\frac{1}{10}$) will be crucial. 

\section{Harmonic lower bound}

We say that a~pair $p_i, p_j \in P$ has \emph{stretch}~$d$ if $\abs{i-j}=d$.
We first need the following preparatory lemma, which determines the coefficients $\lambda_m$ for which $\sum_{m > d} \lambda_m \left(1-\frac{d}{m}\right)$ is equal to $\frac{1}{d}$, i.e., to the weight of a~pair $p_i, p_j$ of stretch~$d$.

\begin{lemma}\label{lem:telescoping}
  For every positive integer $d$, \[\sum\limits_{m = d+1}^{+\infty} \frac{2}{m^2-1} \left(1-\frac{d}{m}\right) = \frac{1}{d}.\]
\end{lemma}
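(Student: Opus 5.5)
We split the summand into two telescoping series. Write the term as
\[\frac{2}{m^2-1}\left(1-\frac{d}{m}\right) = \frac{2}{m^2-1} - \frac{2d}{m(m^2-1)}.\]
Both pieces have standard partial fraction decompositions:
\[\frac{2}{m^2-1} = \frac{1}{m-1} - \frac{1}{m+1}, \qquad \frac{2}{m(m-1)(m+1)} = \frac{1}{m(m-1)} - \frac{1}{m(m+1)}.\]
The second identity holds because $\frac{1}{m(m-1)} - \frac{1}{m(m+1)} = \frac{(m+1)-(m-1)}{m(m-1)(m+1)}$.

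Both series converge absolutely, since their terms are $\Oh(1/m^2)$. We may therefore sum them separately from $m=d+1$ to $+\infty$.

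The first series telescopes with step two:
\[\sum_{m=d+1}^{+\infty}\left(\frac{1}{m-1}-\frac{1}{m+1}\right) = \frac{1}{d}+\frac{1}{d+1}.\]
The second telescopes with step one, since $\frac{1}{m(m+1)}$ is the shift of $\frac{1}{m(m-1)}$:
\[\sum_{m=d+1}^{+\infty}\left(\frac{1}{(m-1)m}-\frac{1}{m(m+1)}\right) = \frac{1}{d(d+1)}.\]
To be careful, I would verify each via partial sums up to $M$ and let $M \to +\infty$, noting that the tail terms vanish.

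Combining the two,
\[\frac{1}{d}+\frac{1}{d+1}-\frac{d}{d(d+1)} = \frac{1}{d}+\frac{1}{d+1}-\frac{1}{d+1} = \frac{1}{d},\]
as claimed.

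There is no real obstacle. The only points needing care are the step-two telescoping, which leaves two boundary terms, and justifying the split of the series, which absolute convergence handles.
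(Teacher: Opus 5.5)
Your proposal is correct and follows essentially the same route as the paper: the same partial fractions, the step-two telescoping giving $\frac{1}{d}+\frac{1}{d+1}$, and a step-one telescoping for the $\frac{d}{m}$ part. The only difference is cosmetic. The paper splits that second part into two series summing to $\frac{1}{d}$ and $\frac{1}{d+1}$, whereas you telescope their difference directly to $\frac{1}{d(d+1)}$, and your explicit convergence justification for splitting the series is a small addition the paper leaves implicit.
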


\begin{proof}
  First note that $\frac{2}{m^2-1} = \frac{1}{m-1} - \frac{1}{m+1}$.
  Thus,
  \[\sum\limits_{m = d+1}^{+\infty} \frac{2}{m^2-1} \left(1-\frac{d}{m}\right) = \sum\limits_{m = d+1}^{+\infty} \left(\frac{1}{m-1} - \frac{1}{m+1}\right) - d \left( \sum\limits_{m = d+1}^{+\infty} \frac{1}{m(m-1)} - \sum\limits_{m = d+1}^{+\infty} \frac{1}{m(m+1)} \right).\]
  By telescoping, 
  \[\sum\limits_{m = d+1}^{+\infty} \left(\frac{1}{m-1} - \frac{1}{m+1}\right) = \frac{1}{d} + \frac{1}{d+1}.\]
  Moreover, $\frac{1}{m(m-1)}=\frac{1}{m-1} - \frac{1}{m}$, so
  \[ \sum\limits_{m = d+1}^{+\infty} \frac{1}{m(m-1)} = \sum\limits_{m = d+1}^{+\infty} \left(\frac{1}{m-1} - \frac{1}{m}\right) = \frac{1}{d}~~\text{and}~~\sum\limits_{m = d+1}^{+\infty} \frac{1}{m(m+1)}= \frac{1}{d+1}.\]
  Therefore,
  \[\sum\limits_{m = d+1}^{+\infty} \frac{2}{m^2-1} \left(1-\frac{d}{m}\right) = \frac{1}{d} + \frac{1}{d+1} - \frac{d}{d} + \frac{d}{d+1} = \frac{1}{d}.\qedhere\]
\end{proof}

We can now show the following lower bound on~$W$ by leveraging \cref{cor:lb-edges}.
We set the constant $m_0 := 2 \hkb = 4620$.

\begin{lemma}\label{lem:lb-W}
  $W \geqslant 2 c_{\ref{cor:lb-edges}} n \ln n - 3c_{\ref{cor:lb-edges}} \ln(4m_0)\,n$.
\end{lemma}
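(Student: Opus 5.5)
The plan is to expand each weight $\frac{1}{j-i}$ with \cref{lem:telescoping}, swap the order of summation, and feed in the interval counts from \cref{cor:lb-edges}. For every nonvisible pair $p_i,p_j$ with $i<j$ and stretch $d:=j-i\leqslant n-1$, \cref{lem:telescoping} gives
\[\frac{1}{d} = \sum_{m=d+1}^{+\infty} \frac{2}{m^2-1}\left(1-\frac{d}{m}\right) \geqslant \sum_{m=d+1}^{n} \frac{2}{m^2-1}\left(1-\frac{d}{m}\right),\]
since all terms are nonnegative. Summing over $E(B)$ and exchanging the sums yields $W \geqslant \sum_{m=2}^{n} \frac{2}{m^2-1} A_m$, where
\[A_m := \sum_{\substack{p_ip_j \in E(B)\\ 1\leqslant j-i\leqslant m-1}} \left(1-\frac{j-i}{m}\right).\]
We may further drop all $m<m_0$, keeping only $m \geqslant m_0$.

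To lower-bound $A_m$ for $m_0 \leqslant m \leqslant n$, I will interpret it combinatorially. Consider the intervals $I_s:=\{p_s,\ldots,p_{s+m-1}\}$ of size $m$, where the start $s$ ranges over $\{1,\ldots,n-m+1\}$. A pair of stretch $d<m$ lies in at most $m-d$ such windows, so
\[m\,A_m = \sum_{\substack{p_ip_j\in E(B)\\ j-i<m}} (m-(j-i)) \geqslant \sum_{s=1}^{n-m+1} e(B[I_s]).\]
Each window has size $m\geqslant 2\hkb$, so \cref{cor:lb-edges} applies to it, giving
\[m A_m \geqslant (n-m+1)\left(c_{\ref{cor:lb-edges}} m^2 - \tfrac{m}{2}\right).\]
Hence $A_m \geqslant (n-m+1)(c_{\ref{cor:lb-edges}} m - \tfrac12)$. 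Using $\frac{2}{m^2-1}\geqslant \frac{2}{m^2}$, the term for $m$ is at least $2(n-m+1)\left(\frac{c_{\ref{cor:lb-edges}}}{m}-\frac{1}{2m^2}\right)$.

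The main obstacle is the bookkeeping of the error terms so that they fit inside $3c_{\ref{cor:lb-edges}}\ln(4m_0)\,n$, and I would organize it as follows. The main part is
\[2c_{\ref{cor:lb-edges}}\sum_{m=m_0}^{n} \frac{n-m+1}{m} \geqslant 2c_{\ref{cor:lb-edges}}\left(n(\ln n - \ln m_0) - n\right),\]
where the bound $\sum_{m=m_0}^n \frac1m \geqslant \ln(n+1)-\ln m_0$ comes from an integral comparison and $\sum_{m=m_0}^n \frac{m-1}{m}\leqslant n$. The $-\frac{1}{2m^2}$ part contributes at most $n\sum_{m\geqslant m_0} \frac{1}{m^2}\leqslant \frac{n}{m_0-1}$, which is tiny. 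Collecting, the loss is at most $n\left(2c_{\ref{cor:lb-edges}}\ln m_0 + 2c_{\ref{cor:lb-edges}} + \frac{1}{m_0-1}\right)$. It remains to check that this is at most $3c_{\ref{cor:lb-edges}}\ln(4m_0)\,n$; this holds comfortably, because $c_{\ref{cor:lb-edges}} > \frac{1}{10}$ makes $c_{\ref{cor:lb-edges}}\ln(4m_0)$ far exceed $2c_{\ref{cor:lb-edges}}+\frac{1}{m_0-1}$. If $n<m_0$, the sum over $m\geqslant m_0$ is empty, so it gives only $W\geqslant 0$; but then the claimed right-hand side is negative, so the statement is trivial. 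That case is dispatched first.
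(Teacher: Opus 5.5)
Your proof is correct and follows the paper's route: expand each weight via \cref{lem:telescoping}, truncate at $m=n$, swap the sums to get $W \geqslant \sum_{m} \frac{2}{m^2-1}A_m$, and lower-bound $A_m$ for $m \geqslant m_0$ using \cref{cor:lb-edges}. The only variation is that you average over all $n-m+1$ sliding windows of size $m$ rather than over the $m$ shifted interval partitions. This gives $A_m \geqslant (n-m+1)(c_{\ref{cor:lb-edges}}m-\frac12)$ with no boundary loss, so you can sum up to $m=n$ instead of $\lfloor n/4\rfloor$, and the final constant check goes through comfortably.
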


\begin{proof}
  We have
  \[ W = \sum\limits_{\substack{p_ip_j \in E(B)\\i<j}} \frac{1}{j-i} = \sum\limits_{d \geqslant 1}~\sum\limits_{\substack{p_ip_j \in E(B)\\ j-i = d}}~\frac{1}{d}.\]
  By~\cref{lem:telescoping}, and since every term of the series is positive, we have
  \[\frac{1}{d} \geqslant \sum\limits_{m=d+1}^n \frac{2}{m^2-1} \left(1-\frac{d}{m}\right).\]
  Thus,
  \[ W \geqslant \sum\limits_{d \geqslant 1}~\sum\limits_{\substack{p_ip_j \in E(B)\\ j-i = d}}~\sum\limits_{m=d+1}^n \frac{2}{m^2-1} \left(1-\frac{d}{m}\right)
  = \sum\limits_{\substack{p_ip_j \in E(B)\\i<j}}~\sum\limits_{m=j-i+1}^n \frac{2}{m^2-1} \left(1-\frac{j-i}{m}\right).\]
  Interchanging the order of summation, we get
  \[ W \geqslant \sum\limits_{m=2}^n~\sum\limits_{\substack{p_ip_j \in E(B)\\ j-i \in [m-1]}} \frac{2}{m^2-1} \left(1-\frac{j-i}{m}\right)
  = \sum\limits_{m=2}^n \frac{2}{m^2-1} \sum\limits_{\substack{p_ip_j \in E(B)\\ j-i \in [m-1]}} \left(1-\frac{j-i}{m}\right).\]
  Thus
  \begin{equation}\label{eq:W}
    W \geqslant \sum\limits_{m=2}^n \frac{2}{m^2-1}\,A_m,
  \end{equation}
  by setting
  \[A_m := \sum\limits_{\substack{p_ip_j \in E(B)\\ j-i \in [m-1]}} \left(1-\frac{j-i}{m}\right).\]

  Our goal is now to lower-bound~$A_m$ in some regime of~$m$.

  Consider the $m$ ways of partitioning the integers into intervals of size~$m$, intersected with~$[n]$.
  We say that an interval partition \emph{captures} an edge~$e$ if both endpoints of~$e$ are in the same interval of the partition.
  Drawing one of the $m$ partitions uniformly at random, the probability that an edge of~$B$ of stretch~$d$ is captured is $\max(1-\frac{d}{m},0)$.
  Thus $A_m$ is the expected number of edges of~$B$ captured by the partition.

  Fix any $m \geqslant m_0 = 2 \hkb$ with $m \leqslant n' := \lfloor \frac{n}{4} \rfloor$.
  We assume that $m_0 \leqslant n'$, since otherwise the lemma statement trivially holds from the nonnegativity of~$W$.
  For each partition, there are at least $\frac{n}{m}-2$ intervals of size~$m$ fully contained in~$[n]$.
  Thus, by~\cref{cor:lb-edges}, \emph{any} partition captures at least
  \[\left(\frac{n}{m}-2\right) \left(c_{\ref{cor:lb-edges}}m^2-\frac{m}{2}\right).\]
  In particular,
  \[A_m \geqslant \left(\frac{n}{m}-2\right) \left(c_{\ref{cor:lb-edges}}m^2-\frac{m}{2}\right) \geqslant c_{\ref{cor:lb-edges}}mn - 2c_{\ref{cor:lb-edges}}m^2 - \frac{n}{2}.\]
  Together with \cref{eq:W}, this implies that
  \[ W \geqslant \sum\limits_{m=2}^n \frac{2}{m^2-1}\,A_m \geqslant \sum\limits_{m=m_0}^{n'} \frac{2}{m^2-1}\,A_m \geqslant \sum\limits_{m=m_0}^{n'} \frac{2}{m^2-1} \left(c_{\ref{cor:lb-edges}}mn - 2c_{\ref{cor:lb-edges}}m^2 - \frac{n}{2}\right).\]
  Therefore,
  \begin{equation}\label{eq:three-sums}
    W \geqslant 2c_{\ref{cor:lb-edges}}n \sum\limits_{m=m_0}^{n'} \frac{m}{m^2-1} - 4c_{\ref{cor:lb-edges}} \sum\limits_{m=m_0}^{n'} \frac{m^2}{m^2-1} - n \sum\limits_{m=m_0}^{n'} \frac{1}{m^2-1}.
  \end{equation}
  We observe that
  \[\sum\limits_{m=m_0}^{n'} \frac{m}{m^2-1} \geqslant \ln \frac{n'+1}{m_0} \geqslant \ln \frac{n}{4m_0}.\]
  Moreover,
  \[ \sum\limits_{m=m_0}^{n'} \frac{m^2}{m^2-1} \leqslant  2n' \leqslant \frac{n}{2}~~\text{and}~~\sum\limits_{m=m_0}^{n'} \frac{1}{m^2-1} \leqslant \frac{3}{4}.\]
  We therefore conclude from \cref{eq:three-sums} that
  \[ W \geqslant 2c_{\ref{cor:lb-edges}}n \ln n - \left(2c_{\ref{cor:lb-edges}} \ln(4m_0) + 2c_{\ref{cor:lb-edges}}  + \frac{3}{4}\right) n
  \geqslant 2c_{\ref{cor:lb-edges}}n \ln n - 3c_{\ref{cor:lb-edges}} \ln(4m_0)\,n. \qedhere\]
\end{proof}

We can now finish the proof of the main result.

\begin{proof}[Proof of~\cref{thm:main}]
  \Cref{lem:ub-W,lem:lb-W} imply that
  \[2c_{\ref{cor:lb-edges}}n \ln n- 3c_{\ref{cor:lb-edges}} \ln(4m_0)\,n \leqslant \frac{1}{5}\,n \ln n + \frac{n}{5}.\]
  Therefore,
  \[\left(2c_{\ref{cor:lb-edges}} - \frac{1}{5}\right) \ln n \leqslant 3c_{\ref{cor:lb-edges}} \ln(4m_0) + \frac{1}{5}.\]
  Since the constant $2c_{\ref{cor:lb-edges}} - \frac{1}{5}$ is positive (recall that $c_{\ref{cor:lb-edges}} > \frac{1}{10}$),
   \[\ln n \leqslant \frac{3c_{\ref{cor:lb-edges}} \ln(4m_0) + \frac{1}{5}}{2c_{\ref{cor:lb-edges}} - \frac{1}{5}},~\text{thus}~~n \leqslant \exp\left(\frac{3c_{\ref{cor:lb-edges}} \ln(4m_0)+\frac{1}{5}}{2c_{\ref{cor:lb-edges}} - \frac{1}{5}}\right) < 10^{11055931}. \qedhere\]
\end{proof}

\paragraph*{AI disclosure.}
After one fruitless attempt and the now customary generic encouragement, a~relatively detailed proof of~\cref{thm:main} was provided by GPT-5.6 Sol Pro after pondering for 222 minutes.
The author's contributions were limited to checking the proof, simplifying some parts, and eventually writing the paper in a~way that would give the author (and hopefully other human readers) a~more pleasant reading experience.  

\bibliographystyle{plain}
\bibliography{main}

@article{Erdos66,
  title={A limit theorem in graph theory},
  author={Erd\H{o}s, Paul and Simonovits, Mikl{\'o}s},
  journal={Studia Sci. Math. Hungar},
  volume={1},
  pages={51--57},
  year={1966}
}

@article{Erdos67,
  title={Some recent results on extremal problems in graph theory ({Results})},
  author={Erd\H{o}s, Paul},
  journal={Theory of Graphs (Internat. Sympos., Rome, 1966)},
  pages={117--123},
  year={1967}
}

@inproceedings{Simonovits68,
  title={A method for solving extremal problems in graph theory, stability problems},
  author={Simonovits, Mikl{\'o}s},
  booktitle={Theory of Graphs (Proc. Colloq., Tihany, 1966)},
  pages={279--319},
  year={1968}
}

@article{Hujter14,
  title={5 Colorable Visibility Graphs Have Bounded Size or 4 Collinear Points},
  author={Hujter, B{\'a}lint and Kisfaludi-Bak, S{\'a}ndor},
  journal={arXiv preprint arXiv:1410.7273},
  year={2014}
}

@article{Turan41,
  title={Eine {Extremalaufgabe} aus der {Graphentheorie}},
  author={Tur{\'a}n, Paul},
  journal={Mat. Fiz. Lapok},
  volume={48},
  pages={436--452},
  year={1941}
}

@article{Kara05,
  author       = {Jan K{\'{a}}ra and
                  Attila P{\'{o}}r and
                  David R. Wood},
  title        = {On the Chromatic Number of the Visibility Graph of a Set of Points
                  in the Plane},
  journal      = {Discret. Comput. Geom.},
  volume       = {34},
  number       = {3},
  pages        = {497--506},
  year         = {2005},
  url          = {https://doi.org/10.1007/s00454-005-1177-z},
  doi          = {10.1007/S00454-005-1177-Z},
  bibsource    = {dblp computer science bibliography, https://dblp.org}
}

@misc{Por10inf,
      title={The Big-Line-Big-Clique Conjecture is False for Infinite Point Sets}, 
      author={Attila~Pór and David R. Wood},
      year={2010},
      eprint={1008.2988},
      archivePrefix={arXiv},
      primaryClass={math.CO},
      note = {arXiv:1008.2988},
      url={https://arxiv.org/abs/1008.2988}, 
}

@article{Popielarz18,
author = {Kamil Popielarz and Julian Sahasrabudhe and Richard Snyder},
title = {A stability theorem for maximal {$K_{r+1}$}-free graphs},
journal = {Journal of Combinatorial Theory, Series B},
volume = {132},
pages = {236--257},
year = {2018},
issn = {0095-8956},
doi = {10.1016/j.jctb.2018.04.001},
url = {https://www.sciencedirect.com/science/article/pii/S0095895618300248}
}

@article{Abel11,
  title={Every large point set contains many collinear points or an empty pentagon},
  author={Abel, Zachary and Ballinger, Brad and Bose, Prosenjit and Collette, S{\'e}bastien and Dujmovi{\'c}, Vida and Hurtado, Ferran and Kominers, Scott Duke and Langerman, Stefan and P{\'o}r, Attila and Wood, David R.},
  journal={Graphs and {C}ombinatorics},
  volume={27},
  number={1},
  pages={47--60},
  year={2011},
  publisher={Springer}
}

@misc{Addario07,
  title={On a geometric {R}amsey-style problem},
  year={2007},
  author={Addario-Berry, Louigi and Fernandes, Cristina and Kohayakawa, Yoshiharu and de Pina, Jos Coelho and Wakabayashi, Yoshiko}
}

@article{Barat15,
  author       = {J{\'{a}}nos Bar{\'{a}}t and
                  Vida Dujmovi{\'c} and
                  Gwena{\"{e}}l Joret and
                  Michael S. Payne and
                  Ludmila Scharf and
                  Daria Schymura and
                  Pavel Valtr and
                  David R. Wood},
  title        = {Empty Pentagons in Point Sets with Collinearities},
  journal      = {{SIAM} J. Discret. Math.},
  volume       = {29},
  number       = {1},
  pages        = {198--209},
  year         = {2015},
  url          = {https://doi.org/10.1137/130950422},
  doi          = {10.1137/130950422},
  bibsource    = {dblp computer science bibliography, https://dblp.org}
}

@article{Pfender08,
  author       = {Florian Pfender},
  title        = {Visibility Graphs of Point Sets in the Plane},
  journal      = {Discret. Comput. Geom.},
  volume       = {39},
  number       = {1--3},
  pages        = {455--459},
  year         = {2008},
  url          = {https://doi.org/10.1007/s00454-008-9056-z},
  doi          = {10.1007/S00454-008-9056-Z},
  bibsource    = {dblp computer science bibliography, https://dblp.org}
}

\end{document}